%
\documentclass{llncs}
\usepackage{makeidx}  
\usepackage{amsmath}
\usepackage{amsfonts,amssymb}
\usepackage[thinlines,thiklines]{easybmat}
\usepackage{multirow}
\usepackage{algorithmic}
\usepackage{algorithm}
\usepackage{url}

\bibliographystyle{amsplain}

\newcommand{\Z}{\mathbb{Z}} 
\newcommand{\Q}{\mathbb{Q}} 
\newcommand{\R}{\mathbb{R}} 
\newcommand{\K}{\mathcal{K}} 
\newcommand{\p}{\mathfrak{p}} 
\newcommand{\M}{\mathcal{M}} 
\newcommand{\OO}{\mathcal{O}_{\mathbb{K}}}
\newcommand{\ag}{\mathfrak{a}} 
\newcommand{\bg}{\mathfrak{b}} 

\newtheorem{heuristic}{Heuristic}
\begin{document}

\mainmatter              
\title{An $L(1/3)$ algorithm for discrete logarithm computation and principality testing in certain number fields}
\titlerunning{An $L(1/3)$ algorithm for certain number fields}  
%
\author{Jean-Fran\c{c}ois Biasse\inst{1}}
\authorrunning{J-F Biasse}   
%
\tocauthor{Jean-Fran\c{c}ois Biasse}
\institute{ \'{E}cole Polytechnique , 91128 Palaiseau , France\\
\email{biasse@lix.polytechnique.fr}
}

\maketitle              

\begin{abstract}
We analyse the complexity of solving the discrete logarithm problem and of testing the principality of ideals in a certain class of number fields. We achieve the subexponential complexity in $O(L(1/3,O(1)))$ when both the discriminant and the degree of the extension tend to infinity by using techniques due to Enge, Gaudry and Thom\'{e} in the context of algebraic curves over finite fields. 
\end{abstract}

\section{Introduction}

Quadratic number fields were proposed as a setting for public-key
cryptosystems in the late 1980s by Buchmann and Williams
\cite{BWKeyEx,BWKeyExReal}. Those cryptosystems were generalized to number fields of arbitrary dimension about a decade later \cite{arb_dim_1,arb_dim_2,arb_dim_3}. Their security relies on the hardness of the discrete logarithm problem and the principality testing problem. The complexity of the algorithms for solving these problem on a number field $\K$ of discriminant $\Delta$ is bounded by $L(1/2,O(1))$, where the subexponential function is defined as
$$L(\alpha,\beta ) = e^{\beta\log_2|\Delta|^{\alpha}\log_2\log_2|\Delta|^{1-\alpha}}.$$
This complexity is asymptotically slower than the one for factoring which reduces to the problem of computing the class number, and although the discrete logarithm problem in the Jacobian of elliptic curves remains exponential, there is no known reduction between this problem and the discrete logarithm problems in number fields either. Therefore, studying the hardness of the discrete logarithm problem and of the principality testing problem on number fields is of cryptographic interrest since they provide alternative cryptosystems whose security is unrelated to those currently being used.\\
In this paper, we exhibit the first infinite class of number fields for which these problems can be solved in expected time bounded by $L(1/3 , O(1) ) $. We follow the approach of Biasse \cite{biasseL13} who described a class of number fields on which class group and regulator computation can be done in expected time $L(1/3,O(1))$, and the one of Enge, Gaudry and Thom\'{e} \cite{Enge,Enge2} who described and algorithm for solving the discrete logarithm problem in complexity $L(1/3 , O(1))$ in certain algebraic curves.\\

\section{Number fields}

Let $\K$ be a number field of degree $n$, $\theta\in\K$, and $T[X] = \sum_{i\leq n} t_iX^i\in\Z[X]$ such that 
$$\K = \Q[X]/T(X) = \Q(\theta).$$
We denote by $\OO$ its maximal order and by $Cl(\OO)$ the ideal class group of its maximal order. The ideal class group of an order is a finite group of cardinality denoted by $h(\OO)$ which is unknown to both parties in number field cryptosystems. Solving the discrete logarithm problem with respect to $\mathfrak{a}$ and $\mathfrak{b}\in Cl(\OO)$ consists of finding $x\in\Z$ such that
$$\mathfrak{b} = \mathfrak{a}^x.$$
The principality testing problem with respect to an ideal $I$ of $\OO$ consists of deciding if there exists $\alpha\in\OO$ such that
$$I = (\alpha),$$
and if so, computing $\alpha$. Direct computation of $\alpha$ in subexponential time is impossible because of the size of its coefficients, thus obliging us to give a compact representation of this value, that is to say a vector $\overrightarrow{v}=(v_1,\hdots,v_k)$ and $\gamma_1,\hdots,\gamma_k\in\K$ satisfying 
$$\alpha = \gamma_1^{v_1}\hdots\gamma_k^{v_k}.$$
In number fields of fixed degree (typically when the dimension is 2), these problems can be solved in subexponential time. The strategy described in \cite{Bsub} consists of defining a factor base $\mathcal{B}$ containing the primes of norm bounded by an integer $B$ and reduce random power-products $\p_1^{e_1}\hdots\p_g^{e_g}$ of elements $\p_i\in\mathcal{B}$ untill an equivalent $\mathcal{B}$-smooth ideal is found. Whenever this occurs, we can derive a row of the so-called relation matrix which after a suitable linear transformation yields the structure of $Cl(\OO)$, and enables us to solve instances of the discrete logarithm problem and principal ideal problem. If the degree is no longer assumed to be fixed, then every reduction step is exponential in the degree of $\K$ since it uses the LLL algorithm \cite{LLL}.

\section{Main idea}

Let $d := \max_i \left\lbrace \log_2(t_i)\right\rbrace$, we require that
\begin{align}\label{cond_n}
 &n= n_0\log_2\left( |\Delta|\right)^{\alpha}(1+o(1))\\
& d= d_0\log_2\left( |\Delta|\right)^{1-\alpha}(1+o(1)),\label{cond_d}
\end{align}
for some $\alpha\in\left[ \frac{1}{3},\frac{2}{3}\right[$, and  some constants $n_0$ and $d_0$. We define  $\kappa:=n_0d_0$. We also denote by $s$ the number of real places, by $t$ the number of complex places and we define $r:=t+s-1$. We also require that  $\Z[\theta] = \OO$.

\subsubsection*{Example}

Let $\Delta\in\Z$, and $\K_{n,K}$ be an extension of $\Q$ defined by a polynomial of the form: 
$$T(X) = X^n - K,$$
with 
\begin{align*}
&\log K = \left\lfloor \log_2\left( |\Delta|\right)^{1-\alpha}\right\rfloor\\
& n= \left\lfloor\log_2\left( |\Delta|\right)^{\alpha}\right\rfloor,
\end{align*}
for some $\alpha\in\left[ \frac{1}{3},\frac{2}{3}\right[$. Then, $\mathcal{O}_{\K_{n,K}}$ has discriminant satisfying: 
$$\log_2(\text{Disc}(\mathcal{O}_{\K_{n,K}}))=\log( n^{n}K^{n-1}) = \log_2(|\Delta|) (1+o(1)).$$
If in addition we require that $n$ and $K$ be the largest prime numbers below their respective bounds such that: 
$$n^2 \nmid K^{n-1}-1,$$
then we meet the last restriction $\Z[\theta] = \mathcal{O}_{\K_{n,K}}$ (for a proof, see \cite{cohen}, Chapter 6 \textsection 1).\\

In \cite{biasseL13}, it is shown that the computation of the group structure of $Cl(\OO)$ and of the regulator of $\OO$ with a number of bits of precision in $L(1/3,O(1))$ could be achieved in expected time $L(1/3,O(1))$ under some assumption that we will specify in the following. The main idea is to use sieving based technique to create relations of the form
$$(\phi) = \p_1^{e_1}\hdots\p_n^{e_n},$$
where $\phi\in\OO$ and the $\p_i$ are non inert prime ideals of norm bounded by a certain integer $B$; we denote this set by $\mathcal{B}$. Every time such a relation is found, the vector
$$(e_1,\hdots,e_n,\log|\phi|_1,\hdots,\log|\phi|_r)$$
is added as a row of the relation matrix $M$, which has the following shape
\[M=
\left( 
\begin{BMAT}[2pt,3cm,1cm]{c.c}{c} 
M_{\Z} & M_{\R}
\end{BMAT}
\right).  \]
Then, provided the rows of $M$ generate the whole lattice of relations, the Smith normal form of $M_{\Z}$ yields the group structure of $Cl(\OO)$ whereas its kernel yields $R$.\\
Now, given two ideals $\mathfrak{a}$ and $\mathfrak{b}$ such that $\exists x\in\Z\ \bg = \ag^x$, computing their discrete logarithm can be done by decomposing them over $\mathcal{B}$,
\begin{align*}
 \mathfrak{a} &= \p_1^{e_1}\hdots \p_n^{e_n} \\
\mathfrak{b} &= \p_1^{f_1}\hdots \p_n^{f_n},
\end{align*}
and performing a linear algebra phase consisting of solving one linear system. Likewise, if we need to test the principality of an ideal $I$ and compute $\alpha$ such that $I = (\alpha)$, then it sufficies to find $b:=[e_1,\hdots,e_n]$ such that
$$I = \p_1^{e_1}\hdots\p_n^{e_n}.$$
$I$ is principal if and only if $b$ belongs to the lattice of relations. We thus solve $XM_{\Z} = b$ and derive $\alpha$ from the coefficients of $X$ and the generators $\phi_i$ of the relations of $M$. We thus see here that solving the discrete logarithm problem and testing the principality rely on our ability to decompose an arbitrary ideal into a power product of elements of $\mathcal{B}$.\\
To do this, we follow the approach of Enge, Gaudry and Thom\'{e} for algebraic curves \cite{Enge,Enge2} involving a $Q$-descent strategy. Given an ideal $I$, it consists of decomposing it as a power product of prime ideals (not necessarily in $\mathcal{B}$), and then decomposing those primes as power products of primes of a lower norm untill we only have prime ideals of norm bouned by $B$.

\section{Relation matrix}

Let $\rho$ be a constant to be determined later, and $B$ a smoothness bound satisfying: 
$$B = \lceil L(1/3,\rho)\rceil.$$ 
We define the factor base $\mathcal{B}$ as the set of all non inert prime ideals of norm bounded by $B$. This factor base has cardinality: 
$$N:=|\mathcal{B}| = L(1/3,\rho+o(1)).$$
The sieving phase consists of enumerating $\phi\in\OO$ of the form
$$\phi = A(\theta),$$
with $A[X]\in\Z[X]$ of degree $k$ whose coefficients $a_i$ have their logarithm bounded by an integer $a$ such that there exist two constants $\delta$ and $\nu$ to be determined later satisfying: 
\begin{align}
 a &\leq \left\lceil \delta \frac{\kappa\log_2|\Delta|/n}{(\log_2|\Delta|/\mathcal{M})^{1/3}} \right\rceil \label{bound_a}\\
 k &\leq \left\lceil \nu \frac{n}{(\log_2|\Delta|/\mathcal{M})^{1/3}} \right\rceil, \label{bound_k}
\end{align}
with $\mathcal{M}:=\log_2\log_2|\Delta|$. Landau-Mignotte's theorem \cite{mignotte} states that if $D\mid T$ with $\deg D=m$, then the coefficients $d_j$ of $D$ satisfy $|d_j|\leq 2^{m-1}(|T| + t_n)$, where $|T|$ is the euclidian norm of the vector of the coefficients of $T$. Applying this to $D = X-\sigma_i(\theta)$ and $m=1$ allows us to obtain:
$$\log(|\theta|_i)\leq \log(|T|+t_n) \in O(\log\left( |\Delta|\right) ^{1-\alpha}),$$
for $i\leq r$. From $\phi = A(\theta)$, and $a$ and $k$ respectively bounded by (\ref{bound_a}) and (\ref{bound_k}), we have
$$\log|\phi|_i \leq O(\log\left( |\Delta|\right) ^{2/3}\M^{1/3}).$$


We can thus derive a bound on the maximum value $|M_\Z|$ of the norm a coefficient of $M_\Z$.
\begin{proposition}\label{bound_B}
$ |M_{\Z}|$ satisfies:
$$ |M_{\Z}| = O(\left( \log_2|\Delta|\right) ^{2/3}\left( \log_2\log_2|\Delta|\right) ^{1/3}).$$
\end{proposition}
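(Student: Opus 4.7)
The plan is to bound the individual coefficients $e_i$ of the integer block $M_{\Z}$ by relating them to the algebraic norm of the generator $\phi$, and then to estimate $|N_{\K/\Q}(\phi)|$ directly from the polynomial representation $\phi = A(\theta)$.

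First I would observe that, since $\phi \in \OO$ is an algebraic integer, the ideal $(\phi)$ is integral, so every exponent in the factorization $(\phi) = \p_1^{e_1}\cdots\p_n^{e_n}$ satisfies $e_i \geq 0$. Taking norms on both sides gives $|N_{\K/\Q}(\phi)| = \prod_i N(\p_i)^{e_i}$, and since each prime ideal $\p_i$ has $N(\p_i) \geq 2$, it follows that $|e_i| \leq \log_2 |N_{\K/\Q}(\phi)|$ for every $i$. So the whole proposition reduces to a bound on $\log|N_{\K/\Q}(\phi)|$.

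Second, I would bound this norm via the product formula $|N_{\K/\Q}(\phi)| = \prod_{i=1}^n |A(\sigma_i(\theta))|$. The naive per-embedding bound $\log |\phi|_i = O(\log_2|\Delta|^{2/3} \M^{1/3})$ established just before the proposition is far too loose when multiplied over the $n$ embeddings, so instead I would use the elementary estimate $|A(\sigma_i(\theta))| \leq (k+1)\max_j|a_j|\cdot \max(1,|\sigma_i(\theta)|)^k$ and, taking the product over embeddings, factor out the Mahler measure
$$|N_{\K/\Q}(\phi)| \leq (k+1)^n \cdot 2^{na} \cdot M(T)^k.$$
Applying Landau-Mignotte to $T$ itself gives $M(T) \leq \|T\|_2 \leq \sqrt{n+1}\cdot 2^d$, so that $k\log_2 M(T) = kd + O(k\log n)$.

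Third, I would substitute the bounds (\ref{bound_a}) and (\ref{bound_k}) together with (\ref{cond_n}) and (\ref{cond_d}). A direct computation shows that each of the two dominant contributions, $na$ and $kd$, evaluates to $O(\kappa \log_2|\Delta|\cdot(\M/\log_2|\Delta|)^{1/3}) = O(\log_2|\Delta|^{2/3}\M^{1/3})$, the $\kappa$ factor being absorbed in the $O$-constant. The remaining contributions $n\log(k+1)$ and $k\log\sqrt{n+1}$ are lower order since $n,k$ grow only polynomially in $\log|\Delta|$. Combining with the first step yields $|M_{\Z}| \leq \log_2|N_{\K/\Q}(\phi)| = O(\log_2|\Delta|^{2/3}\M^{1/3})$.

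The main obstacle is precisely the second step: one must avoid the crude bound $\log|N(\phi)| \leq n\cdot\max_i\log|\phi|_i$, which would spuriously inflate the estimate by a factor $n = \log_2|\Delta|^{\alpha}$ and break the target exponent $2/3$. Exploiting the product-over-embeddings structure to absorb the large conjugates into the Mahler measure of $T$, rather than into each factor separately, is what delivers the claimed bound.
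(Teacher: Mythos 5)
Your proof is correct, and it actually fills in a genuine gap: the paper gives no explicit proof of this proposition. The only preceding argument is the per-embedding bound $\log|\phi|_i = O(\log_2|\Delta|^{2/3}\M^{1/3})$, followed by the remark ``we can thus derive a bound on $|M_\Z|$.'' As you rightly point out, this per-embedding bound alone does \emph{not} give the proposition: taking the product over all $n$ embeddings would yield $\log_2 N(\phi) = O(\log_2|\Delta|^{2/3+\alpha}\M^{1/3})$, which inflates the exponent by $\alpha$. The bound that is actually needed, $\log_2 N(\phi) \leq \kappa\log_2|\Delta|^{2/3}\M^{1/3}(\delta+\nu+o(1))$, is stated later in the paper --- but only as part of Heuristic~2, without derivation, even though it is provable. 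Your Mahler-measure argument supplies exactly the missing step: by writing $\prod_i\max(1,|\sigma_i(\theta)|) = M(T) \leq \|T\|_2 \leq \sqrt{n+1}\cdot 2^d$ (valid since $T$ is monic, which the hypothesis $\Z[\theta]=\OO$ forces), the ``large conjugate'' contribution collapses from $n\cdot O(\log_2|\Delta|^{1-\alpha})$ down to $O(d + \log_2 n)$, and the two surviving terms $na$ and $kd$ both land on $O(\log_2|\Delta|^{2/3}\M^{1/3})$ as you compute. The reduction from $|M_\Z| = \max_i |e_i|$ to $\log_2 N(\phi)$ via integrality of $(\phi)$ and $N(\p_i)\geq 2$ is also correct. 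In short: your route is the right one, the one the paper almost certainly intends but leaves unwritten, and the obstacle you flag (naive per-embedding multiplication) is exactly where a careless reader of the paper would go wrong.
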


During the relation collection phase, we collect $N+Kr$ relations, where $K$ is a constant. We rely on the following heuristic to make sure that we generate the full lattice of relations.

\begin{heuristic}\label{heuristic:lattice}
The  $N+Kr$ relations collected this way generate the full lattice of relations.
\end{heuristic}

\section{Smoothness}\label{sec:smooth}

We need to evaluate the smoothness of ideals with respect to $\mathcal{B}$. Let $\psi_{\mathcal{I}}(\iota,\mu)$ be the set of ideals $I$ such that $\mathcal{N}(I)\leq \iota$ which are smooth with respect to the set of primes $\p$ satisfying $\mathcal{N}(\p)\leq \mu$ and $\psi(x,y)$ be the set of integers of logarithm bounded by $x$ smooth with respect to primes of logarithm bouded by $y$. $\psi$ was first described in \cite{Canfield} by Canfield, Erd\"{o}s Pomerance. We need to make the following assumption on the smoothness of ideals.
\begin{heuristic}\label{heuristic:norm}
We assume that
\begin{equation}\label{eq:smooth_ideal}
\frac{\psi(\iota,\mu)_{\mathcal{I}}}{e^\iota}\geq\exp\left( -u \left( \log_2 u + \log_2\log_2 u -1 + O\left( \frac{\log_2\log_2 u}{\log_2 u}\right) \right) \right), 
\end{equation}
for $u = \iota / \mu$. In addition, assume that $\mathcal{N}(\phi)$ behaves like a random number whose logarithm satisfies
$$\log_2(\mathcal{N}(\phi))\leq \iota:=\kappa\log_2\left( |\Delta|\right) ^{2/3}\mathcal{M}^{1/3}(\delta + \nu + o(1) ),$$
and whose distribution is given by 
\begin{equation}\label{eq:smooth_principal}
\frac{\psi(\iota,\mu)}{e^\iota}\geq\exp\left( -u \left( \log_2 u + \log_2\log_2 u -1 + O\left( \frac{\log_2\log_2 u}{\log_2 u}\right) \right) \right).
\end{equation}
\end{heuristic}



The assertion concerning $\psi_{\mathcal{I}}$ can be proved in the quadratic case\cite{seysen} but remain conjectural for arbitrary $n$ \cite{Bsub}. In the context of curves, Enge, Gaudry and Thom\'{e} used a theorem due to Hess to derive the equivalent of \eqref{eq:smooth_ideal} for divisors in the jacobian of a curve, but had to use a similar heuristic for \eqref{eq:smooth_principal}.
 Using \cite{Canfield}, and carrying out the same computation as in the proof of theorem 1 of \cite{Enge,Enge2}, one readily shows the following result on the probability of finding a relation:
\begin{proposition}\label{smoothness}
Let :
\begin{align*}
\iota&= \lfloor\log_2 L(\phi , c)\rfloor = \lfloor c\log_2\left( |\Delta|\right) ^{\phi}\M^{1-\phi}\rfloor \\
\mu&=  \lceil\log_2 L(\beta,d)\rceil= \lceil d\log_2\left( |\Delta|\right) ^{\beta}\M^{1-\beta}\rceil,
\end{align*}
then we have 
\begin{align*}
\frac{\psi(\iota,\mu)_\mathcal{I}}{e^{\nu}}& \geq L\left( \phi-\beta,\frac{-c}{d}(\phi-\beta)+o(1)\right)\\
\frac{\psi(\iota,\mu)}{e^{\nu}} & \geq L\left( \phi-\beta,\frac{-c}{d}(\phi-\beta)+o(1)\right)
\end{align*}
\end{proposition}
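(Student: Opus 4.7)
The proof plan is to substitute the given expressions for $\iota$ and $\mu$ directly into the smoothness estimate supplied by Heuristic~\ref{heuristic:norm}, and to reorganize the resulting expression in the $L$-notation. Both parts of the proposition follow from the same calculation, since Heuristic~\ref{heuristic:norm} provides the same lower bound in each of the integer and ideal settings; thus I would carry out the computation once, for $\psi_{\mathcal{I}}$, and remark that the argument for $\psi$ is identical.

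First I would set $u := \iota/\mu$. A direct substitution of the definitions gives
$u = \tfrac{c}{d}\,\log_2|\Delta|^{\phi-\beta}\,\mathcal{M}^{-(\phi-\beta)}\,(1+o(1))$,
which tends to infinity with $|\Delta|$ since $\phi>\beta$. Taking logarithms, the dominant term in $\log u$ is $(\phi-\beta)\log\log_2|\Delta|$, as $\log\mathcal{M} = o(\log\log_2|\Delta|)$ and the prefactor $\log(c/d)$ is bounded. In particular $\log\log u/\log u = o(1)$, so the exponent $-u(\log u + \log\log u - 1 + O(\log\log u/\log u))$ in Heuristic~\ref{heuristic:norm} reduces to $-u\log u\,(1+o(1))$.

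Multiplying out and using $\log\log_2|\Delta| = (\log 2)\,\mathcal{M}$ yields
$-u\log u = -\tfrac{c}{d}(\phi-\beta)\,\log_2|\Delta|^{\phi-\beta}\,\mathcal{M}^{1-(\phi-\beta)}\,(1+o(1))$,
which is exactly $\log L\bigl(\phi-\beta,\,-\tfrac{c}{d}(\phi-\beta)+o(1)\bigr)$ once the harmless change of log-base is absorbed into the $o(1)$ in the second argument of $L$. Exponentiating gives both inequalities; this is the calculation already carried out in the proof of Theorem~1 of~\cite{Enge,Enge2}.

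The main obstacle is not conceptual but book-keeping: one must verify that the subleading terms $\log\log u - 1 + O(\log\log u/\log u)$, together with the floor/ceiling rounding in the definitions of $\iota$ and $\mu$ and the switch between $\log$ and $\log_2$, all collapse into the $o(1)$ factor appearing in the second coordinate of $L(\phi-\beta,\cdot)$. The asymptotics established above make this routine, but it is the step where care is required to stay honest about constants.
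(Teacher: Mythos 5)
Your proposal is correct and follows essentially the same route as the paper: the paper's proof of Proposition~\ref{smoothness} is a one-line appeal to Canfield--Erd\H{o}s--Pomerance and to ``the same computation as in the proof of theorem 1 of \cite{Enge,Enge2}'', and the calculation you spell out (form $u=\iota/\mu$, show $\log u = (\phi-\beta)\mathcal{M}(1+o(1))$, so the bound $\exp(-u\log u(1+o(1)))$ collapses to $L(\phi-\beta,-\tfrac{c}{d}(\phi-\beta)+o(1))$) is exactly that computation. One nit worth flagging for your own bookkeeping rather than as an error: a change of log base is a fixed multiplicative constant in the exponent and would not in general ``absorb into the $o(1)$'' of an $L$-constant; it is only harmless here because the paper itself uses a single implicit base consistently (writing $\log_2 L(\phi,c) = c\log_2|\Delta|^\phi\mathcal{M}^{1-\phi}$, which presumes that convention), so you inherit rather than introduce that sloppiness.
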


Proposition \ref{smoothness} allows us to bound the expected time for finding a $\mathcal{B}$-smooth ideal. In \textsection \ref{sec:decomp}, we show how to decompose prime ideals of the form $p\OO + (\theta - v_p)\OO$ over a set of prime ideals of the same form with a smaller norm. In the general case, prime ideals can have a ramification index $f\geq 2$ and thus be of the form $p\OO + T_p(\theta)\OO$ where $\deg(T_p) = f$. However, it can be shown that the ramified primes have Dirichlet density 0, allowing us to consider that $\mathcal{B}$-smooth decomposition with unramified primes occur with the same probability as in Proposition \ref{smoothness}. A proof of this result can be found in Chapter IV, Proposition 4.5 of \cite{janusz} for example.

Proposition \ref{smoothness} with parameters $\beta = \frac{1}{3}$, $ d = \rho$, $\phi = \frac{2}{3}$ and $c = \kappa(\delta+\nu+o(1))$
shows that the expected number of trials to obtain a relation is at most $L\left( 1/3,\frac{\kappa(\nu+\delta)}{3\rho}+o(1)\right)$. Since the factor base has size $N\in O\left( L(1/3,\rho)\right) $, the complexity of the relation collection phase with respect to the parameters $\rho,\nu,\kappa,\delta$ is in
$$L\left( 1/3,\frac{\kappa(\nu+\delta)}{3\rho}+\rho+o(1)\right).$$
 These parameters are chosen to ensure that the overall time be optimal. The linear algebra phase is polynomial in the dimension of $M$ wich is given by $L(1/3,\rho + o(1))$. We need to compute the regulator, which can be done in expected time $L(1/3,3\rho + o(1))$ provided the bit precision is also bounded by $L(1/3,3\rho + o(1))$ (see \cite{biasseL13}). It is shown in \cite{JaSto} that linear systems of the form $X M_\Z$ can be solved in time 
$$O\left( N^3(\log_2 n + \log_2|M_\Z|)^2\right),$$
where $|M_\Z|$ is the largest absolute value of a coefficient of $M_\Z$. The computation of a discrete logarithm in $Cl(\OO)$ with Vollmer's method \cite{Vdl} is done by solving a system of the form $XM_\Z'$ where $M_\Z'$ is $M_\Z$ augmented with two extra rows whose coefficients are proved to be bounded by $e^{o(\log_2|\Delta|^{1/3}\mathcal{M}^{2/3})}$ in \textsection \ref{sec:DLP}. The linear algebra phase thus has a complexity bounded by $L(1/3,3\rho + o(1))$. We emphasize here that we do not need to compute the group structure of $Cl(\OO)$, thus avoiding the computation of the Hermite Normale Form of $M_\Z$. We can prove that the optimal strategy is to spend the same amount of time for the relation collection and for the linear algebra. Therefore, the parameters must satisfy
\begin{equation}\label{constr1}
\kappa\nu\delta = 3\rho. 
\end{equation}
In addition, the number of $\phi$ in the search space is in $O\left( L(1/3) , \nu\delta\kappa\right)$. We thus have the additional constraint on the parameters
\begin{equation}\label{constr2}
\nu\delta\kappa = \frac{\kappa(\nu+\delta)}{3\rho}+\rho,	
\end{equation}
ensuring that the search space is large enough to yield the $N+Kr$ relations. From \eqref{constr1} and \eqref{constr2}, we obtain 
\begin{align*}
\nu\delta &= \frac{3\rho}{\kappa} \\
\nu + \delta &= \frac{6\rho^2}{\kappa}.
\end{align*}
Thus, $\delta$ and $\nu$ are roots of the polynomial
$$X^2 - \frac{6\rho^2}{\kappa}X + \frac{3\rho}{\kappa}.$$
These roots exist provided we have
$$\rho\geq \sqrt[3]{\frac{\kappa}{3}}.$$
The optimal choice is to minimize $\rho$, thus fixing the parameters $\delta$ and $\nu$:
$$\delta = \nu = \sqrt{\frac{3\rho}{\kappa}}.$$
The total running time becomes $L(1/3 ,c + o(1))$, with
$$c = 3\rho= \sqrt[3]{9\kappa}.$$

\section{Decomposition over $\mathcal{B}$}\label{sec:decomp}

Assuming Heuristics \ref{heuristic:lattice} and \ref{heuristic:norm}, we can study the complexity of the $Q$-descent. In what follows, we show how to decompose an ideal as a power product of elements of $\mathcal{B}$ starting with a lemma allowing us to find integers $\alpha_1,\hdots,\alpha_{k+1}$ minimizing $\sum_i\alpha_i v_i$ for some $v_i$.

\begin{lemma}\label{lemma:smooth}
Let $v_1,\hdots,v_{k+1}$ be integers satisfying $\log|v_i|\leq D$ for some integers $D$ and $k$ defined by
$$k := \left\lfloor \sigma \frac{n}{(\log_2|\Delta|/\mathcal{M})^{1/3-\tau/2}}\right\rfloor\ \ D := \log_2\left( L(1/3+\tau,c)\right) ,$$
where $\sigma ,\tau, c > 0$. Then for any integer $z$, there exist at least $2^{kz}$ $(k+1)$-tuples $(\alpha_1,\hdots,\alpha_{k+1})$ satisfying
\begin{align*}
\log_2|\alpha_i|&\leq D/k + z \\
\log_2\left|\sum_i \alpha_i v_i \right| &\leq D/k + z.
\end{align*}
\end{lemma}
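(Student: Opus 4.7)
The plan is to prove this by a pigeonhole argument on the linear form $L:\Z^{k+1}\to\Z$ defined by $L(\alpha_1,\ldots,\alpha_{k+1})=\sum_i\alpha_iv_i$. The idea is that the domain $B$ of small tuples is much larger than the image of the restriction of $L$ to $B$, so many tuples share the same image; then differences of such tuples give us the $(k+1)$-tuples we want.

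Concretely, set $N:=2^{\lfloor D/k+z\rfloor}$ and take $B:=\{(\alpha_1,\ldots,\alpha_{k+1})\in\Z^{k+1} : 0\le\alpha_i<N\}$, which contains $N^{k+1}$ tuples. Since $|v_i|\le 2^{D}$, the image $L(B)$ lies inside the integer interval $[-(k+1)N\,2^{D},(k+1)N\,2^{D}]$. I would partition this interval into consecutive sub-intervals of length $N$; there are at most $2(k+1)2^{D}+1$ of them. By the pigeonhole principle some sub-interval $I$ has preimage of cardinality at least $N^{k+1}/(2(k+1)2^{D}+1)$. Pick a reference tuple $\alpha^{*}\in L^{-1}(I)\cap B$ and associate to every other $\alpha\in L^{-1}(I)\cap B$ the difference $\beta:=\alpha-\alpha^{*}$. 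Each such $\beta$ automatically satisfies $|\beta_i|<N\le 2^{D/k+z}$, and since $L(\alpha),L(\alpha^{*})$ both belong to an interval of length $N$, also $|L(\beta)|\le N\le 2^{D/k+z}$, which gives exactly the two bounds required by the lemma.

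It remains to count the number of difference tuples produced. We have
\[
\frac{N^{k+1}}{2(k+1)2^{D}+1}-1 \;\ge\; \frac{2^{(k+1)(D/k+z)-(k+1)}}{4(k+1)2^{D}}-1 \;\ge\; 2^{\,D/k+(k+1)z - c_k}-1,
\]
where $c_k=O(\log k)$ absorbs the additive constants and the $\log_2(k+1)$ coming from the number of sub-intervals. To conclude that this is at least $2^{kz}$, I need $D/k+z\ge c_k$, i.e.\ $D/k\gg\log k$. Plugging in the parameter bounds $D=\log_2 L(1/3+\tau,c)$ and $k=\Theta\!\left(n/(\log_2|\Delta|/\mathcal{M})^{1/3-\tau/2}\right)$ and recalling $n=n_0\log_2|\Delta|^{\alpha}$ with $\alpha<2/3$, one checks that $D/k$ grows like a positive power of $\log_2|\Delta|$ times a power of $\mathcal{M}$, while $\log k=O(\log\log_2|\Delta|)$; hence the condition $D/k\gg\log k$ is comfortably satisfied for $|\Delta|$ large, and the desired lower bound $2^{kz}$ follows.

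The step I expect to be the main obstacle is the last one, namely verifying carefully that the logarithmic overhead $c_k=O(\log k)$ is indeed negligible against $D/k$ under the parameter regime \eqref{cond_n}--\eqref{cond_d} and the definitions of $D,k$ in the lemma; the pigeonhole itself and the difference trick are standard. A subsidiary point to handle cleanly is the edge case where $D/k+z<0$, where $N=0$ and the lemma must be interpreted as the vacuous assertion that the zero tuple alone realises the required $2^{kz}\le 1$ tuples.
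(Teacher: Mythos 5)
Your pigeonhole-plus-differences strategy is a legitimate and more elementary alternative to the paper's argument, which instead forms the rank-$(k+1)$ lattice generated by the rows of
\[
A=\left(\begin{array}{ccccc}1&0&\hdots&0&v_1\\0&1&\ddots&\vdots&\vdots\\\vdots&\ddots&\ddots&0&\vdots\\0&\hdots&0&1&v_{k+1}\end{array}\right)
\]
in $\R^{k+2}$, bounds its covolume by $\sqrt{2k+1}\,2^{D}$, and invokes Minkowski's lattice-point counting theorem (Cassels, III.2.2) for the symmetric convex box of half-side $2^{D/k+z}$. Both routes reduce to the same volume-versus-covolume comparison, and the condition they extract is $D/k\gg\log_2(2k+1)$.

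However, your accounting contains a genuine gap. By taking $N:=2^{\lfloor D/k+z\rfloor}$ you lose almost a full bit in each of the $k+1$ coordinates, so
\[
N^{k+1}=2^{(k+1)\lfloor D/k+z\rfloor}\ \ge\ 2^{(k+1)(D/k+z)-(k+1)},
\]
and the $-(k+1)$ in the exponent survives all the way into your final bound: the constant you call $c_k$ is in fact $c_k=(k+1)+2+\log_2(k+1)=k+O(\log k)$, \emph{not} $O(\log k)$. Consequently the inequality you actually need is $D/k+z\ge k+O(\log k)$, i.e.\ $D/k\gg k$, and \emph{this can fail in the paper's parameter range}. Indeed, under \eqref{cond_n}--\eqref{cond_d} one has $D/k\asymp(\log_2|\Delta|)^{2/3-\alpha+\tau/2}\M^{1/3-\tau/2}$ while $k\asymp(\log_2|\Delta|)^{\alpha-1/3+\tau/2}\M^{1/3-\tau/2}$, so $D/k\gg k$ holds only when $\alpha<1/2$; for $\alpha\in[1/2,2/3)$, which the paper explicitly allows, $k$ dominates $D/k$ and your chain of inequalities does not deliver $2^{kz}$ tuples. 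The verification you carry out at the end ($D/k\gg\log k$) is correct but establishes a weaker condition than the one your bound actually requires.

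The fix is straightforward: do not floor the exponent. Take $N:=\lceil 2^{D/k+z}\rceil$, so that $N^{k+1}\ge 2^{(k+1)(D/k+z)}=2^{D+D/k+(k+1)z}$ with no loss, while the differences still satisfy $|\beta_i|\le N-1<2^{D/k+z}$ and $|L(\beta)|\le N-1<2^{D/k+z}$. Then the exponent deficit is genuinely $2+\log_2(k+1)=O(\log k)$ and the condition $D/k+z\ge 2+\log_2(k+1)$ coincides with the one the paper verifies; your final paragraph, which only checked $D/k\gg\log k$, then does close the argument for all $\alpha\in[1/3,2/3)$.
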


\begin{proof}
Let us define the $k+1$ dimensional lattice $\Lambda$ generated by the rows of
\[ A:=\left( \begin{array}{ccccc}
1 & 0      & \hdots & 0  & v_1    \\
0 & 1& \ddots &\vdots & \vdots \\
\vdots & \ddots & \ddots & 0 & \vdots     \\
0      &  \hdots     & 0& 1 & v_{k+1}\\
\end{array} \right).\] 
For any element $x\in\Lambda$, there exist $(\alpha_1,\hdots,\alpha_{k+1})\in\Z^{k+1}$ such that
$$x = (\alpha_1,\hdots,\alpha_{k+1},\sum_i\alpha_i v_i).$$
The determinant $d(\Lambda)$ of $\Lambda$ satisfies
$$d(\Lambda)= \sqrt{\det\left( AA^T\right)}= \sqrt{\sum_{i\leq k+1}v_i + \sum_{i\leq k+1} v_iv_{k+1-i}}\leq\left( \sqrt{2k+1}\right) 2^D.$$ 
Let $X\subset\R^{k+2}$ be the symmetric and convex set of points defined by 
$$X = \left\lbrace (x_1,\hdots,x_{k+2})\mid \forall i\ |x_i| \leq D/k + z\right\rbrace .$$
The volume $V(X)$ equals $2^{k+2}e^{(k+2)(D/k+z)}$, and from Theorem II of III.2.2 in \cite{cassel} we know that if
$$V(X) > m 2^{k+2}d(\Lambda),$$
then $X$ intersect $\Lambda$ in at least $m$ pairs of points $\pm x\in\R^{k+2}$. It thus suffices to prove that 
$$2^{kz} < \frac{2^{(k+2)(\frac{D}{k}+z)}}{\sqrt{2k+1}e^D} = 2^{kz}.\frac{2^{2\frac{D}{k} + 2z}}{\sqrt{2k+1}},$$
which is satisfied since 
$$\frac{D}{k} = \frac{c}{\sigma}\log_2|\Delta|^{2/3 - \alpha  +\tau/2}\log_2\log_2|\Delta|^{1/3-\tau/2}\gg \log_2 (2k+1).$$
\end{proof}

Using Lemma \ref{lemma:smooth}, we can state the analogue of Theorem 8 in \cite{Enge2}. Please note here that the proof we give is almost verbatim, the main difference being the use of Lemma \ref{lemma:smooth}.

\begin{theorem}\label{theo:smooth}
Assuming Heuristic \ref{heuristic:norm}, we can decompose any ideal $I$ of $\OO$ into a power product of elements of $\mathcal{B}$ in time
$$L(1/3 , b + \varepsilon),$$
with $b = \sqrt[3]{\frac{24\kappa}{9}}$ and any $\varepsilon > 0$.
\end{theorem}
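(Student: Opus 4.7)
The plan is to mirror Enge, Gaudry and Thom\'e's proof of Theorem 8 in \cite{Enge2}, substituting Lemma \ref{lemma:smooth} for the lattice-theoretic step used in the curve setting. Given an ideal $I$, I first replace it by an equivalent ideal that factors over primes of reasonable norm: by multiplying $I$ with random power-products of small primes and testing the resulting norm for smoothness via Proposition \ref{smoothness}, we obtain in time $L(1/3, O(1))$ a decomposition $I \sim \prod_i \p_i^{e_i}$ in which every $\p_i$ satisfies $\mathcal{N}(\p_i) \leq B_0 := L(1/3 + \tau_0, c_0)$ for suitable initial parameters $\tau_0 \in (0, 1/3)$ and $c_0$.

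The descent then proceeds through a finite sequence of levels whose smoothness exponents $1/3 + \tau_j$ decrease down to $\tau_J = 0$, so that the final bound is the factor-base bound $B$. At level $j$, for each prime $\p = p\OO + (\theta - v_p)\OO$ with $\mathcal{N}(\p) = p \leq B_j$, I apply Lemma \ref{lemma:smooth} to the $k+1$ integers $v_i := v_p^{i-1} \bmod p$ with $D := \log_2 p$, producing many polynomials $A(X) = \sum_i \alpha_i X^{i-1}$ with small coefficients whose evaluation $A(v_p) = \sum \alpha_i v_i$ modulo $p$ is controlled in absolute value; sieving among the sublattice of such polynomials for which $p \mid A(v_p)$ yields elements $\phi := A(\theta) \in \p$. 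The bound on $\mathcal{N}(\phi)$ follows from the coefficient estimates of the lemma together with $\log|\theta|_i = O(\log|\Delta|^{1-\alpha})$, and Heuristic \ref{heuristic:norm} then gives the probability that the quotient ideal $(\phi)/\p$ factors over primes of norm $\leq B_{j-1}$. After an expected $L(1/3, O(1))$ trials one obtains a relation expressing $\p$ as a product of primes of strictly smaller norm bound, ready for the next level.

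The proof concludes by chaining these level-by-level relations down to primes in $\mathcal{B}$ and by choosing the sequence $(\tau_j, c_j)$ together with the internal parameters $\sigma, z, \tau$ of Lemma \ref{lemma:smooth} so as to minimise the dominant cost. The main obstacle is precisely this parameter optimisation: one must verify that the cost of descending a single level stays within $L(1/3, b + \varepsilon)$ uniformly in $j$, while simultaneously ensuring that the number of levels and the total number of primes generated along the descent contribute only subexponentially, so that the telescoping bound holds. The exponent $b = \sqrt[3]{24\kappa/9}$ will emerge from a Lagrange-multiplier style optimisation analogous to the one that fixed $\rho = \sqrt[3]{\kappa/3}$ and $c = 3\rho$ in the relation-collection analysis of the preceding section, applied here to the two free parameters $\sigma$ (the polynomial degree) and the coefficient-size shift $z$ appearing in Lemma \ref{lemma:smooth}.
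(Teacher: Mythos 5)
Your overall architecture---a level-by-level $Q$-descent driven by Lemma~\ref{lemma:smooth} and Heuristic~\ref{heuristic:norm}, mirroring Enge--Gaudry--Thom\'{e}'s Theorem~8---is the right one and matches the paper. However, two of your steps do not actually work as stated, and a third is too vague to yield the claimed constant.

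First, the initial reduction step is flawed. You propose to replace $I$ by an equivalent ideal decomposed over primes of norm at most $B_0 = L(1/3+\tau_0,c_0)$ with $\tau_0 \in (0,1/3)$, by multiplying $I$ with random power-products of small primes and testing the result for smoothness, and you claim this runs in time $L(1/3,O(1))$. But a random ideal equivalent to $I$ has norm roughly $\sqrt{|\Delta|}$, i.e.\ $L(1,O(1))$, and by Proposition~\ref{smoothness} the probability that such an ideal is $L(1/3+\tau_0,c_0)$-smooth is about $L\left(2/3-\tau_0,\,\cdot\,\right)^{-1}$. Since $\tau_0<1/3$ gives $2/3-\tau_0>1/3$, the expected number of trials is superpolynomial in $L(1/3,\cdot)$, so this preliminary step alone would dominate and break the claimed bound. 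The paper avoids it entirely: the descent is started directly at $\tau_0 = 2/3$, $c_0 = 1$ (an ideal of norm bounded by $|\Delta|$), and the very first descent level---searching for small-degree, small-coefficient $\phi \in I$ via Lemma~\ref{lemma:smooth}---is what brings the smoothness bound down to $L(2/3,c_1)$. There is no separate random-walk smoothing.

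Second, your use of Lemma~\ref{lemma:smooth} at a descent node loses a factor of $p$ as written. You apply the lemma to $v_i := v_p^{i-1} \bmod p$, obtain polynomials $A$ with $\left|\sum_i \alpha_i v_i\right|$ controlled, and then restrict to those with $p \mid A(v_p)$. Among the lattice points the lemma produces, only about a $1/p$ fraction satisfy this divisibility, which is fatal. The paper instead builds the modulus into the lattice: the $\phi$ range over the $\Z$-lattice generated by $(u,\theta-v_1,\hdots,\theta^k-v_k)$ with $v_0 = u$ and $v_i = v^i \bmod u$, so that \emph{every} lattice element already lies in $I = u\OO + (\theta-v)\OO$, and Lemma~\ref{lemma:smooth} is applied with $u$ among its $v_i$'s. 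Your phrasing could be read charitably as running the lemma's volume argument inside the index-$p$ sublattice---which would be fine---but as written the order of operations is backwards.

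Third, deferring the constant $b = \sqrt[3]{24\kappa/9}$ to ``a Lagrange-multiplier style optimisation'' skips the actual mechanism. The paper shows that taking $\tau_i = \tau_{i-1}/2$ yields a recursion $c_i = \frac{2\sqrt{\kappa}}{3e}\sqrt{c_{i-1}+e}$ converging to a finite limit $c_\infty = \frac{\chi}{2}\left(\chi + \sqrt{\chi^2 + 4e}\right)$ with $\chi = 2\sqrt{\kappa}/3e$; one then sets $c_\infty = \rho$ and solves for the per-level effort $e$, which after the substitution $9e^3 = E\kappa$ reduces to $(3/E)^{1/3} = (2/E)(1+\sqrt{1+E})$ with least nonnegative root $E_0 \approx 24$. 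Without this fixed-point analysis and the observation that after $O(\log_2\log_2|\Delta|)$ levels the residual factor $\M^{1/(3\cdot 2^{i-1})}$ drops below any $1+\xi$, the numerical value of $b$ does not emerge, so this is a genuine gap rather than a deferred detail.
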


\begin{proof}
Let $I$ be an ideal of norm bounded by $|\Delta|$. We can assume this without loss of generality since any class of $Cl(\OO)$ contains an ideal of norm bounded by $(2/\pi)^s\sqrt{\Delta}$Let $I = u\OO + (\theta - v)\OO$ be an ideal of norm bounded by $ L(1/3+\tau,c)$ for some $c>0$ and $0\leq \tau \leq 2/3$. The ideal we start has $\tau = 2/3$ and $c=1$. Indeed, it can be proved that any class of $Cl(\OO)$ contains an ideal of norm bounded by $|\Delta|$. We search a $L(1/3 + \tau/2,c')$-smooth $\phi\in I$ for a $c'$ depending on $c$. Such a $\phi$ satisfies $I\mid (\phi)$ and thus $I$ can be decomposed as a power product of the prime ideals involved in the decomposition of $(\phi)$. We repeat this process untill we obtain a decomposition only involving elements of $\mathcal{B}$. At each stage, we consider $\phi$ belonging to the lattice of polynomials of degree bounded by 
$$k := \left\lfloor \sigma \frac{n}{(\log_2|\Delta|/\mathcal{M})^{1/3-\tau/2}}\right\rfloor,$$
where $\sigma > 0$ is a constant to be determined later. These $\phi$ form a $\Z$-lattice generated by
$$(v_0,\theta - v_1 , \hdots , \theta^k - v_k),$$
with $v_0 = u$ and $v_i = v^i\mod u$ for $i\geq 1$. We want to spend the same time $L(1/3,e+o(1))$ at each smoothing step for $e>0$ to be optimised later. The sieving space has to be of the same size. We thus look for $L(1/3,e+o(1))$ distinct $(k+1)$-tuples $(\alpha_1,\hdots,\alpha_{k+1})\in\Z^{k+1}$. Using Lemma \ref{lemma:smooth}, we prove that for every integer $z$, we can find $2^{kz}$ such tuples satisfying $\log_2|\alpha_i|\leq D/k + z$ for $i\leq k+1$ and $\log_2\left|\sum_i \alpha_i v_i \right| \leq D/k + z$. We ajust the value of $z$ to make sure that all the $L(1/3,e+o(1))$ obtained during the sieving phase satisfy this property by solving $2^{kz} = L(1/3, e + o(1))$. This yields
$$z = \frac{1}{n}\log_2 L(2/3 - \tau/2,e/\sigma + o(1)).$$
Carrying on the same computation as in \cite{Enge,Enge2}, we can prove that the norm of the $\phi$ we create during the sieving phase satisfies
$$\mathcal{N}(\phi)\leq  L(2/3 + \tau/2 , (c+e)/\varphi + o(1)).$$
From Heuristic \ref{heuristic:norm} and Proposition \ref{smoothness} we expect to find at least one $ L(1/3+\tau/2 ,c')$-smooth $\phi$ for 
$$c' = \frac{1}{3e}((c+e)/\sigma + \sigma\kappa).$$
This quantity is minimised by $\sigma = \sqrt{(c+e)/\kappa}$ which yields
$$c' = \frac{2\sqrt{\kappa}}{3e}\sqrt{c+e}.$$
Starting with $\tau_0 = 2/3$ and $c_0 = 1$, we obtain a power-product of places of norm bounded by $L(1/3+\tau_1 , c_1)$ with $\tau_1 = 1/3$ and $c_1 = 2\sqrt{\kappa(c_0+e)}/3e$. After $i$ step we get an ideal $L(1/3 + 1/(3.2^{i-1}),c_i)=L(1/3,c_i\mathcal{M}^{\frac{1}{3.2^{i-1}}})$-smooth where
$$\tau_i = \frac{1}{3.2^{i-1}},\ \ c_i = \frac{2\sqrt{\kappa}}{3e}\sqrt{c_{i-1}+e}.$$
The sequence $c_i$ converves to a finite limit $c_\infty$ given by
$$c_\infty = \chi/2\left( \chi + \sqrt{\chi^2 + 4e}\right) ,$$
where $\chi = 2\sqrt{\kappa}/3e$. Let $\xi>0$ be an arbitrary constant. Afer a number of steps only depending on $e$, $\kappa$ and $\xi$, we have $c_i < c_\infty(1+\xi)$, and after $O(\log_2\log_2|\Delta|)$ steps $ \mathcal{M}^{\frac{1}{3.2^{i-1}}} < (1+\xi)$. We can thus decompose $I$ as a power-product of prime ideals of norm bounded by
$$L(1/3 , c_\infty (1+\xi)).$$
As each node of the tree has arity $\log_2|\Delta|$, the number of nodes in the tree is in $L(1/3,o(1))$ and the complexity of the algorithm is in $L(1/3,e+o(1))$. As we want to decompose $I$ as a power product of primes of norm bounded by $L(1/3,\rho)$, we compute the effort to reach $c_\infty = \rho$. As in \cite{Enge,Enge2}, we write $9e^{1/3} = E\kappa$ for $E$ with $E$ to be determined later. The equation $\rho = c_\infty$ simplifies as
$$\left( \frac{3}{E}\right)^{1/3} = \frac{2}{E}(1+ \sqrt{1+E}).$$
The least non negative solution $E_0$ satifies $E_0\approx 24$, which yields
$$e = \sqrt[3]{\frac{24\kappa}{9}}=:b$$
\end{proof}

The time taken to decompose an ideal over $\mathcal{B}$ is subexponential with a constant $b + \varepsilon$ stricly lower than the one minimizing the time taken by the relation collection and the linear algebra (see \textsection \ref{sec:smooth}). Therefore, there is no need for a more elaborated optimization of the parameters encapsulating the time for decomposing an ideal over $\mathcal{B}$.

\section{Discrete Logarithm algorithm and principality testing}\label{sec:DLP}

We follow the approach of Vollmer in quadratic fields\cite{Vdl} to compute discrete logarithms without computing the group structure of $Cl(\OO)$. Given two ideals $\ag$ and $\bg$ such that there exists an integer $x$ satisfying $\bg = \ag^x$, we wish to compute $x$. We enlarge the factor base with $\ag$ and $\bg$ and let $\mathcal{B}' = \mathcal{B}\cup\left\lbrace \ag,\bg\right\rbrace $. Then we use the methods of \textsection \ref{sec:decomp} to decompose $\ag$ and $\bg$ over $\mathcal{B}$, thus creating two extra relations over $\mathcal{B}'$
\begin{equation}\label{eq:decomp_ab}
\p_1^{e_1}\hdots\p_N^{e_N}\ag=\alpha_\ag,\ \ \  \p_1^{f_1}\hdots\p_N^{f_N}\bg=\alpha_\bg.
\end{equation}
Then we construct the extended relation matrix 
\[ M_\Z' := \left( 
   \begin{BMAT}(@)[2pt,1.5cm,1.5cm]{c.c}{c.c}
   \begin{BMAT}(2pt,1cm,1cm){c}{c}
M_\Z
  \end{BMAT} &
\begin{BMAT}(2pt,0.5cm,1cm){c}{c}
(0)
  \end{BMAT} \\
\begin{BMAT}(2pt,1cm,0.5cm){c}{cc} 
	\overrightarrow{v_{\mathfrak{b}}} \\
	\overrightarrow{v_{\mathfrak{a}}}
\end{BMAT} &
\begin{BMAT}(2pt,0.5cm,0.5cm){cc}{cc} 
	1 & 0 \\
	0 & 1
\end{BMAT} 
\end{BMAT}
   \right),
\]
where $\overrightarrow{v_{\mathfrak{a}}} = (e_1,\hdots,e_N)$ and $\overrightarrow{v_{\mathfrak{b}}} = (f_1,\hdots,f_N)$. The relation $\bg = \ag^x$ corresponds to the row vector $\overrightarrow{v_{x}}:=(0,\hdots,1,-x)$ which is a combination of the rows of $M_\Z'$ under Heuristic \ref{heuristic:lattice}. Therefore, there exists $X=(x_1,\hdots,x_{N + rK})$ such that $XM_\Z' = \overrightarrow{v_{x}}$. In particular $x_{N+Kr} = -x$. We can thus obtain $x$ by solving $XA = \overrightarrow{v}$ where

\[ A := \left( 
   \begin{BMAT}(@)[1pt,1cm,1cm]{c.c}{c.c}
   \begin{BMAT}(2pt,1cm,1cm){c}{c}
M_\Z
  \end{BMAT} &
\begin{BMAT}(1pt,0.25cm,0.75cm){c}{c}
(0)
  \end{BMAT} \\
\begin{BMAT}(1pt,0.75cm,0.25cm){c}{cc} 
	\overrightarrow{v_{\mathfrak{b}}} \\
	\overrightarrow{v_{\mathfrak{a}}}
\end{BMAT} &
\begin{BMAT}(1pt,0.75cm,0.25cm){c}{cc} 
	1  \\
	0 
\end{BMAT} 
\end{BMAT}
   \right)\ \ \text{and}\ \  \overrightarrow{v} = (0,\hdots,0,1).
\]

It is shown in \cite{JaSto} that the complexity of this step in in 
$$O\left( N^3(\log_2 n + \log_2|A|)^2\right),$$
where $|A| = \max|a_{ij}|$. We already know a bound on the norm of the coefficients of $M_\Z$, but we still have to bound those of $\overrightarrow{v_{\mathfrak{a}}}$ and $\overrightarrow{v_{\mathfrak{b}}}$.

\begin{lemma}\label{lemma:size_coeff}
The size of the coefficients of $\ag$ and $\bg$ is bounded by $O\left(\log_2|\Delta|^{\log_2\log_2|\Delta|}\right)$.
\end{lemma}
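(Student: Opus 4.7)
The plan is to analyse the recursion tree produced by the $Q$-descent of Theorem \ref{theo:smooth} applied to $\ag$ (and symmetrically to $\bg$), tracking how local exponents combine multiplicatively as the nested decompositions are composed. Each coefficient of $\overrightarrow{v_{\ag}}$ is the exponent of some $\p\in\mathcal{B}$ in the final decomposition of $\ag$, so it suffices to bound every such exponent by $(\log_2|\Delta|)^{O(\log_2\log_2|\Delta|)}$.

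First I would make the recursion tree $T$ explicit: at every internal node we have an ideal $\mathfrak{I}$, an element $\phi_\mathfrak{I}\in\mathfrak{I}$ produced by the descent, and a local relation of the form $(\phi_\mathfrak{I}) = \mathfrak{I}\cdot\prod_j \mathfrak{I}_j^{m_j}$, whence $\mathfrak{I} = (\phi_\mathfrak{I})\cdot\prod_j \mathfrak{I}_j^{-m_j}$; the children of $\mathfrak{I}$ in $T$ are the primes $\mathfrak{I}_j$ and the leaves are the primes of $\mathcal{B}$. I would then assemble three bounds. Each local exponent $m_j$ is at most $\log_2\mathcal{N}(\phi_\mathfrak{I}) = O(\log_2|\Delta|)$, since $\mathcal{N}(\phi_\mathfrak{I})$ is controlled by the norm estimates already collected in the proof of Theorem \ref{theo:smooth}. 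The arity of $T$ at any node is bounded by the same quantity, being the number of distinct prime factors of $(\phi_\mathfrak{I})$. Finally, the depth of $T$ is $O(\mathcal{M})$, which is precisely the iteration count established in the proof of Theorem \ref{theo:smooth}.

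I would then unfold the recursion. Iterating the identity $\mathfrak{I} = (\phi_\mathfrak{I})\cdot\prod_j \mathfrak{I}_j^{-m_j}$ from the root down to the leaves shows that the exponent $e_\p$ of a leaf prime $\p\in\mathcal{B}$ in the decomposition of $\ag$ is a signed sum, over root-to-leaf paths in $T$ ending at a leaf labelled $\p$, of the products $\prod_{\text{node on path}} m_{\text{node}}$. Bounding this crudely by (number of leaves of $T$) times (maximum path product), that is by $(\text{arity})^{\text{depth}}\cdot(\max m_j)^{\text{depth}}$, gives
\[
|e_\p|\leq (\log_2|\Delta|)^{O(\mathcal{M})} = (\log_2|\Delta|)^{O(\log_2\log_2|\Delta|)},
\]
which is the claimed bound. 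The same argument applies verbatim to $\bg$.

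The step I expect to require the most care is justifying that the depth bound $O(\mathcal{M})$ of Theorem \ref{theo:smooth} really applies when the starting ideal has norm up to $|\Delta|$: one has to check that the very first descent step, corresponding to $\tau=2/3$ and $c=1$, fits into the recursive analysis leading to $\tau_i = 1/(3\cdot 2^{i-1})$ with a bounded initial constant, after which the geometric decrease of $\tau_i$ and the telescoping of $c_i$ carried out in Theorem \ref{theo:smooth} apply unchanged. Once this is in place, the remaining estimates are a routine tree-telescoping.
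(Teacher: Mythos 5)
Your proof takes essentially the same route as the paper: analyse the recursion tree of the $Q$-descent, bound each local exponent by $O(\log_2|\Delta|)$ (via the norm of $\phi$), bound the depth by $O(\log_2\log_2|\Delta|)$ as in the proof of Theorem~\ref{theo:smooth}, and conclude that the final exponents are $(\log_2|\Delta|)^{O(\log_2\log_2|\Delta|)}$. You are in fact a bit more careful than the paper's argument in explicitly accounting for the arity factor---the exponent at a leaf is a signed sum over all root-to-leaf paths, not just a single product---but since the arity is itself $O(\log_2|\Delta|)$ this only affects the implied constant in the exponent and the stated asymptotic is unchanged.
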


\begin{proof}
At each smoothing step, an ideal $I$ of norm satisfying $\mathcal{N}(I)\leq O(\log_2|\Delta|)$ is smoothed. The largest possible exponent $e$ of this decomposition occurs if $I = \p_1^e$ thus yielding 
$$e\leq \frac{\mathcal{N}(I)}{\mathcal{N}(\p_1)}\leq \frac{\mathcal{N}(I)}{2}\in O(\log_2|\Delta|).$$
The depth of the tree is bounded by $O\left( \log_2\log_2|\Delta||\right)$, so the size of the maximal coefficient occuring in the decomposition of $\ag$ and $\bg$ is bouned by $O\left(\log_2|\Delta|^{\log_2\log_2|\Delta|}\right)$.
\end{proof}

We know from Proposition \ref{bound_B} that $ |M_{\Z}| = O(\left( \log_2|\Delta|\right) ^{2/3}\left( \log_2\log_2|\Delta|\right) ^{1/3})$, allowing us to conclude that the overall expected time of the discrete logarithm algorithm is bounded by $L(1/3,3\rho + o(1))$ where $\rho =  \sqrt[3]{\frac{\kappa}{3}}.$

\begin{proposition}
Let $\ag$ and $\bg$ be ideals such that there exists $x\in\Z$ satisfying $\bg = \ag^x$. Under Heuristics \ref{heuristic:lattice} and \ref{heuristic:norm}, the expected time to compute $x$ is in
$$L\left( 1/3 , 3\rho +o(1)\right),$$
where  $\rho =  \sqrt[3]{\frac{\kappa}{3}}$.
\end{proposition}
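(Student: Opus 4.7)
The approach is to estimate separately the three phases the algorithm must execute and show that each is bounded by $L(1/3, 3\rho + o(1))$: (i) collecting $N + Kr$ relations to form the matrix $M_\Z$; (ii) decomposing $\ag$ and $\bg$ over $\mathcal{B}$ via the $Q$-descent of Section \ref{sec:decomp} to obtain the two extra rows $\overrightarrow{v_\ag}$, $\overrightarrow{v_\bg}$ of $M_\Z'$; and (iii) solving the linear system $XA = \overrightarrow{v}$ which, under Heuristic \ref{heuristic:lattice}, admits a solution whose last coordinate is exactly $-x$.

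For phase (i), the analysis of Section \ref{sec:smooth} with the optimal parameters $\delta = \nu = \sqrt{3\rho/\kappa}$ already yields relation collection in expected time $L(1/3, 3\rho + o(1))$ under Heuristic \ref{heuristic:norm}. For phase (ii), Theorem \ref{theo:smooth} produces decompositions of $\ag$ and $\bg$ in time $L(1/3, b + \varepsilon)$ with $b = \sqrt[3]{24\kappa/9}$. Since $b^{3} = 8\kappa/3 < 9\kappa = (3\rho)^{3}$, so $b < 3\rho$, choosing $\varepsilon$ small enough absorbs this phase into $L(1/3, 3\rho + o(1))$.

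The remaining estimate is phase (iii). By \cite{JaSto}, the system $XA = \overrightarrow{v}$ can be solved in $O(N^{3}(\log_2 n + \log_2 |A|)^{2})$ bit operations. With $N = L(1/3, \rho + o(1))$ one has $N^{3} = L(1/3, 3\rho + o(1))$. The matrix $A$ consists of the block $M_\Z$, whose entries are bounded by $O((\log_2|\Delta|)^{2/3}\mathcal{M}^{1/3})$ by Proposition \ref{bound_B}, together with the two rows $\overrightarrow{v_\ag}, \overrightarrow{v_\bg}$ coming from the descent, whose entries are at most $(\log_2|\Delta|)^{\mathcal{M}}$ by Lemma \ref{lemma:size_coeff}. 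In both cases $\log_2|A| = O(\mathcal{M}^{2})$, so the factor $(\log_2 n + \log_2|A|)^{2}$ contributes only an $L(1/3, o(1))$ multiplicative overhead, and phase (iii) costs $L(1/3, 3\rho + o(1))$.

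The delicate point, and the one I would verify most carefully, is the control of $\log_2 |A|$: the factor $(\log_2 n + \log_2|A|)^{2}$ in the estimate of \cite{JaSto} must remain subexponential in $L(1/3, \cdot)$, which is precisely why it matters that the $Q$-descent of Section \ref{sec:decomp} produces a tree of depth only $O(\mathcal{M})$ and that each smoothing step introduces exponents of size $O(\log_2 |\Delta|)$. Given this, taking the maximum of the three contributions yields the expected running time $L(1/3, 3\rho + o(1))$ with $\rho = \sqrt[3]{\kappa/3}$, as claimed.
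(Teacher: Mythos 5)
Your proposal is correct and follows essentially the same line as the paper: collect relations (Section~\ref{sec:smooth}), decompose $\ag$ and $\bg$ via the $Q$-descent (Theorem~\ref{theo:smooth}), and solve the linear system using the bound from \cite{JaSto} together with Proposition~\ref{bound_B} and Lemma~\ref{lemma:size_coeff} to control $\log_2|A|$. Your explicit verification that $b^3 = 8\kappa/3 < 9\kappa = (3\rho)^3$, so that the descent phase is dominated by the other two, is a small but useful addition that the paper leaves implicit.
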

	

Now let us study how we can decide whether a given arbitrary ideal $I$ is principal, and if so compute  
$\alpha$ such that  $I = (\alpha)$. To this end, we first decompose $I$ over $\mathcal{B}$ using Theorem \ref{theo:smooth}. We thus obtain a vector $b\in\Z^N$ representing the decomposition of $I$ over $\mathcal{B}$. As we assume Heuristic \ref{heuristic:lattice}, $b$ belongs to the lattice spanned by the rows of $M_Z$ if and only if $I$ is principal. Therefore, solving $XM_\Z = b$ allows us to decide whether $I$ is principal. 
Using the same strategy as for the analysis of the discrete logarithm problem algorithm, we can prove that this step has complexity $L(1/3,3\rho + o(1))$.
\begin{proposition}
Under Heuristics \ref{heuristic:lattice} and \ref{heuristic:norm}, the expected time to decide if $I$ is principal and to compute a compact representation if $\alpha$ such that $I = (\alpha)$ is bounded by
$$L\left( 1/3 , 3\rho +o(1)\right),$$
where  $\rho =  \sqrt[3]{\frac{\kappa}{3}}$.
\end{proposition}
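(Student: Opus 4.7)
The plan is to mirror the strategy used for the discrete logarithm proposition just proved: reduce the principality problem to a single linear system over $M_\Z$, and then account for the cost of (i) decomposing $I$ over $\mathcal{B}$, (ii) solving the linear system, and (iii) recovering a compact representation. First I would apply Theorem~\ref{theo:smooth} to $I$, obtaining in time $L(1/3, b + \varepsilon)$ an exponent vector $b \in \Z^N$ such that $I = \p_1^{e_1}\cdots\p_N^{e_N}$ (modulo a known principal ideal that arises from the $Q$-descent, which only changes $\alpha$ by a known multiplicative factor). Under Heuristic~\ref{heuristic:lattice}, the rows of $M_\Z$ generate the full lattice of relations, so $b$ lies in this lattice if and only if $I$ is principal, and this can be tested by attempting to solve $XM_\Z = b$.

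Next, I would extract the compact representation. If $X = (x_1, \ldots, x_{N+Kr})$ is a solution and the $j$-th row of $M_\Z$ was produced from the relation $(\phi_j) = \prod_i \p_i^{e_{ij}}$, then $\alpha$ can be written as the power product $\prod_j \phi_j^{x_j}$ (adjusted by the generators produced along the descent tree used to obtain $b$); this vector together with the list of $\phi_j$'s is exactly the compact representation required. Both the $\phi_j$ and the descent generators are outputs of the preceding phases, so there is nothing new to compute at this stage.

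For the complexity, the decomposition cost $L(1/3, b + \varepsilon)$ with $b = \sqrt[3]{24\kappa/9}$ is strictly smaller than $L(1/3, 3\rho)$, so it is absorbed. The dominant cost is the linear solve, to which the bound $O(N^3 (\log_2 n + \log_2 |A|)^2)$ of \cite{JaSto} applies, with $A$ being $M_\Z$ augmented by the row $b$. From Proposition~\ref{bound_B} we have $\log_2|M_\Z| = O(\log_2|\Delta|^{2/3}\mathcal{M}^{1/3})$, and $N = L(1/3,\rho+o(1))$, so the $N^3$ factor contributes $L(1/3, 3\rho+o(1))$ while the $(\log_2|A|)^2$ factor is absorbed in the $o(1)$.

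The one place that requires care, and that I expect to be the main technical step, is bounding the coefficients of $b$. I would argue as in Lemma~\ref{lemma:size_coeff}: at each node of the descent tree, an ideal of norm $O(\log_2|\Delta|)$ is decomposed into primes of norm at least $2$, so the exponents appearing at that node are in $O(\log_2|\Delta|)$; with descent depth $O(\log_2\log_2|\Delta|)$ the coefficients of $b$ are at most $O\!\left(\log_2|\Delta|^{\log_2\log_2|\Delta|}\right)$, whose logarithm is $O(\mathcal{M}^2)$ and therefore $o(\log_2|\Delta|^{1/3}\mathcal{M}^{2/3})$. Hence $\log_2|A|$ does not disturb the asymptotic, and the total expected running time is $L(1/3, 3\rho+o(1))$ with $\rho = \sqrt[3]{\kappa/3}$, as claimed.
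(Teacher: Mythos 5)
Your proposal follows the paper's own argument: decompose $I$ over $\mathcal{B}$ via Theorem~\ref{theo:smooth}, test membership of the resulting exponent vector in the relation lattice by solving $XM_\Z = b$ under Heuristic~\ref{heuristic:lattice}, and bound the linear-algebra cost by the estimate of \cite{JaSto} together with Proposition~\ref{bound_B} and the Lemma~\ref{lemma:size_coeff}-style bound on the entries of $b$. You actually supply more detail than the paper (the compact representation of $\alpha$ from the $x_j$ and $\phi_j$, and the observation that the descent produces $I$ only up to a known principal multiplier), but the route and the resulting $L(1/3,3\rho+o(1))$ bound are the same; one small slip, of no consequence here, is that Proposition~\ref{bound_B} bounds $|M_\Z|$ itself by $O(\log_2|\Delta|^{2/3}\mathcal{M}^{1/3})$, not $\log_2|M_\Z|$.
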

\bibliography{qfdlp}

\end{document}